\documentclass[a4paper,11pt]{amsart}

\usepackage{url}
\usepackage{amssymb}
\usepackage{color}
\usepackage[normalem]{ulem}
\usepackage[utf8]{inputenc}

\usepackage{cite}

\newcommand{\beaa}{\begin{eqnarray*}}

\newcommand{\qedskip}{$\qed$ \medskip}
\newcommand{\bean}{\begin{eqnarray}\nonumber}

\DeclareFontFamily{OT1}{rsfs}{}
\DeclareFontShape{OT1}{rsfs}{m}{n}{ <-7> rsfs5 <7-10> rsfs7 <10->
rsfs10}{}
\DeclareMathAlphabet{\mathscr}{OT1}{rsfs}{m}{n}

\newcommand{\noredA}{A}

\newcommand{\mcM}{\mathscr M}

%
%
%
%
%

\newcommand{\eq}[1]{\eqref{#1}}

\newcommand{\bel}[1]{\begin{equation}\label{#1}}
\newcommand{\beal}[1]{\begin{eqnarray}\label{#1}}
\newcommand{\beadl}[1]{\begin{deqarr}\label{#1}}
\newcommand{\eeadl}[1]{\arrlabel{#1}\end{deqarr}}
\newcommand{\eeal}[1]{\label{#1}\end{eqnarray}}
\newcommand{\eead}[1]{\end{deqarr}}
\newcommand{\eea}{\end{eqnarray}}
\newcommand{\eeaa}{\end{eqnarray*}}

\newcommand{\be}{\begin{equation}}
\newcommand{\ee}{\end{equation}}

\DeclareFontFamily{OT1}{rsfs}{}
\DeclareFontShape{OT1}{rsfs}{m}{n}{ <-7> rsfs5 <7-10> rsfs7 <10->
rsfs10}{} \DeclareMathAlphabet{\mycal}{OT1}{rsfs}{m}{n}

\newcounter{mnotecount}[section]

\newcommand{\N}{{\Bbb N}}

%
%

\newcommand{\rmnote}[1]{}

\newcommand{\Ric}{\operatorname{Ric}}

%
%
%
%

\def\mysavedown#1{\edef\mysubs{\mysubs#1}}
\def\mysaveup#1{\edef\mysups{\mysups#1}}
\def\mydown#1{{\mytensor}_{\vphantom{\mysubs}#1}}
\def\myup#1{{\mytensor}^{\vphantom{\mysups}#1}}
\def\tensor#1#2{
  #1
  \def\mytensor{\vphantom{#1}}
  \def\mysubs{\relax}
  \def\mysups{\relax}
  \let\down=\mysavedown
  \let\up=\mysaveup
  #2
  \let\down=\mydown
  \let\up=\myup
  #2
  }


\newcommand{\Hess}{\operatorname{Hess}}

\newcommand{\Tr}{\operatorname{Tr}}
\newcommand{\grav}{\operatorname{grav}}

\newcommand{\R}{\mathbb R}

\newcommand{\mbbS}{\mathbb S}

\renewcommand{\setminus}{\smallsetminus}

\renewcommand{\to}{\rightarrow}
\renewcommand{\div}{\operatorname{div}}
\renewcommand{\epsilon}{\varepsilon}
\renewcommand{\hat}{\widehat}

\def\crn#1#2{{\vcenter{\vbox{
        \hbox{\kern#2pt \vrule width.#2pt height#1pt
           }
          \hrule height.#2pt}}}}

\newcommand{\griem}{\mathfrak G}
\newcommand{\glorentz}{\mathcal G}
\newcommand{\Ogr}{{O_{\mathring g}}}
\newcommand{\Ogrbar}{{O_{\rho^2 \mathring g}}}

\renewcommand{\hbar}{{\overline h}}

\newcommand{\pre}[2]{{{\vphantom{#2}}^{#1}}\kern-.2ex{#2}}

\sloppy \raggedbottom

\theoremstyle{plain}
\newtheorem{theorem}{\sc Theorem}[section]
\newtheorem{lemma}[theorem] {\sc Lemma}
\newtheorem{proposition}[theorem]{\sc Proposition}

\newtheorem{prop}[theorem]{\sc Proposition}
\newtheorem{corollary}[theorem] {\sc Corollary}

\theoremstyle{definition}


\newtheorem{remark}[theorem]{\sc  Remark\rm}

\numberwithin{equation}{section}

\date{December 8, 2016.}

\renewcommand{\glorentz}{{ {\mathbf g}}}

\renewcommand{\griem}{{ {\mathfrak g}}}

\begin{document}

\title[Static  spacetimes with negative $\Lambda$] {{Non-singular spacetimes with a negative cosmological constant: II.  Static solutions of the Einstein-Maxwell equations}}

\thanks{Preprint UWThPh-2016-26}

\author[P.T. Chru\'sciel]{Piotr T.~Chru\'sciel}

\address{Piotr
T.~Chru\'sciel, Erwin Schr\"odinger Institute and Faculty of Physics, University of Vienna, Boltzmanngasse 5, A1090 Wien, Austria}
\email{piotr.chrusciel@lmpt.univ-tours.fr} \urladdr{
http://www.phys.univ-tours.fr$/\sim${piotr}}

\author[E.  Delay]{Erwann Delay}

\address{Erwann Delay, Avignon Universit\'e, Laboratoire de Math\'ematiques d’Avignon (EA 2151)
F-84018 Avignon} \email{Erwann.Delay@univ-avignon.fr}
\urladdr{http://www.math.univ-avignon.fr}

\begin{abstract}
We construct infinite-dimensional families of non-singular static
space times, solutions of the vacuum Einstein-Maxwell equations with a negative cosmological constant. The families include  an infinite-dimensional family of solutions with the usual AdS conformal structure at conformal infinity.


{\bf MSC(2010) numbers: 83C20, 83E15}
\end{abstract}

\maketitle

\tableofcontents
\section{Introduction}\label{section:intro}

 Stationary and static solutions play a fundamental role in any theory.  Stable such solutions provide families
of possible end states of the problem at hand. Unstable ones display features of the solutions that are likely not to be encountered at late times of evolution.

Lichnerowicz's theorem asserts that the only vacuum stationary well-behaved solution with $\Lambda=0$ is Minkowski space-time. It therefore came as a surprise that there exist many stationary solutions of the vacuum Einstein equations with negative cosmological constant which have a smooth conformal structure~\cite{ACD,ACD2,ChDelayStationary}. In particular
there exist many globally well behaved stationary vacuum space-times with negative cosmological constant which have no symmetries whatsoever.

Now, Lichnerowicz's theorem generalises to static Einstein-Maxwell equations with $\Lambda=0$~\cite{Heusler:book}: there is again only one such regular solution, namely Minkowski space-time. It is likewise of interest to enquire what happens when $\Lambda <0$. In this work we show that, similarly, no Lichnerowicz-type theorem exists in this case:
we prove existence of infinite-dimensional families of static metrics satisfying the Einstein-Maxwell equations with a negative cosmological constant
and  admitting
a smooth conformal completion at infinity.

More precisely, we show that a
large class of such fields can be constructed by prescribing the
conformal class of a static Lorentzian metric and the asymptotic behaviour of the electric field on the conformal
boundary $\partial \mcM$, provided that the boundary class is
sufficiently close to, e.g., that of anti-de Sitter spacetime and  {the freely prescribable leading coefficient in an asymptotic expansion of the  electric potential is sufficiently small.}
This complements our previous work on vacuum metrics in~\cite{ChDelayStationary}, which we think of as paper I in this series.

The key new feature of the current result, as compared to~\cite{ACD,ACD2,ChDelayStationary}, is that we can drive the solution with the electric field  maintaining, if desired, a conformally flat structure at the conformal boundary at infinity.

 Once this work was completed we have noticed~\cite{BBSLMR}, where a class of numerical solutions of the Einstein-Maxwell equations with $\Lambda<0$  is presented. Our work justifies rigorously the existence of weak-matter-fields configurations within the family constructed numerically in~\cite{BBSLMR}, and provides many other static non-vacuum solutions near anti-de Sitter.

We thus seek to construct Lorentzian metrics ${}\glorentz$, solutions of Einstein-Maxwell equations with  {a negative cosmological constant $
\Lambda$, in any
space-dimension $n\geq 3$. {More precisely,
we consider the following field equations for a metric
$$
 \glorentz=g_{\mu\nu}dx^\mu dx^ \nu
$$
and a two-form field $F$ in space-time dimension $n+1$, $n\ge 3$
generalising the Einstein-Maxwell equations in dimension $3+1$ as
\bel{11X16.1}
\Ric(\glorentz)-\frac{ \Tr \Ric(\glorentz)}2 {}\glorentz+\Lambda{}\glorentz=F\circ F-\frac{1}{4}\langle F,F\rangle_{\glorentz}\glorentz
 \,,
\ee
where $\Tr$ denotes a trace with the relevant metric (which should be obvious from the context), $(F\circ F)_{\alpha \beta} :=   g^{\mu\nu} F_{\alpha \mu} F_{\beta\nu}$, and
$$
 \langle F,F\rangle  \equiv  g^{\alpha \beta}  g^{\mu\nu} F_{\alpha \mu} F_{\beta\nu}=: |F|^2
 \,.
$$
This is complemented with the Maxwell equations
\bel{11X16.2}
\mbox{div}_{\glorentz}F=0 =dF
 \,.
\ee
We further assume existence of  a hypersurface-orthogonal globally timelike} Killing vector
$X=\partial/\partial t$. In adapted coordinates the space-time metric can
be written as
\beal{gme1} &{}\glorentz = -V^2dt^2 + \underbrace{g_{ij}dx^i dx^j}_{=g}\,,
 &
\\
 &
\partial_t V  = \partial_t g=0\,.
\eeal{gme2}
{We further assume that the} Maxwell field takes the form
\beal{gme1+} &  F=d(Udt)\,, \quad \partial_t U  =0
 \,.
&
\eea

Our main result reads as follows (see Section~\ref{sec:def} below for the definition of
non-degeneracy; the function $\rho$ in \eq{result} is a coordinate
near the conformal boundary at infinity $\partial M$ that vanishes at $\partial M$):

\begin{theorem}\label{maintheorem}
 Let $n=\dim M\ge 3$, $k\in\N\setminus\{0\}$, $\alpha\in(0,1)$, and
 consider an Einstein metric $\mathring{\glorentz}$ as in
 \eq{gme1}-\eq{gme2} with strictly positive $V=\mathring V$,
 $g=\mathring g$, such that the associated Riemannian
 metric $\mathring{\griem}={\mathring V}^2d\varphi^2+\mathring g$ on
 $\mbbS^1\times M$ is $C^2$-compactifiable and non-degenerate, with
 smooth conformal infinity.  For every smooth $\hat{U}$,
 sufficiently close to zero in $C^{k+2,\alpha}(\partial M)$,
 there exists a unique, modulo diffeomorphisms which are the
 identity at the boundary, solution of the static Einstein-Maxwell equations of the form
 \eq{gme1}-\eq{gme1+} such that, in local coordinates near the
 conformal boundary $\partial M$,
 \beal{20XI16.1}
  & V-\mathring
 V=O(\rho)\,,\quad
 g_{ij}-\mathring g_{ij} =O(1)
 \,,
 &
\\
 &  \phantom{x} U=\hat U +\left\{
                               \begin{array}{ll}
                                 O(\rho), & \hbox{$n=3$;} \\
                                 O(\rho^2 \ln \rho), & \hbox{$n=4$;}\\
                                 O(\rho^2), & \hbox{$n\ge 5  $.}
                               \end{array}
                             \right.
 &
\eeal{result}
\end{theorem}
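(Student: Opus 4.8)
The plan is to recast the static Einstein-Maxwell system \eqref{11X16.1}--\eqref{gme1+} as an elliptic system on $M$ and to solve it by the implicit function theorem, linearising at the background $(\mathring V,\mathring g,\,U\equiv 0)$. First I would carry out the standard dimensional reduction: inserting the ansatz \eqref{gme1}--\eqref{gme1+} into \eqref{11X16.1}--\eqref{11X16.2}, the Maxwell equation \eqref{11X16.2} collapses to a single scalar equation for the potential,
\begin{equation}
 \div_g\Big(\tfrac1V\,\grad_g U\Big)=0\,,
 \label{plan:max}
\end{equation}
while the Einstein equations \eqref{11X16.1} split into a scalar equation for $V$ and a tensorial equation for $g$, both carrying an electromagnetic source that is \emph{quadratic} in $dU$. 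Exactly as in paper~I~\cite{ChDelayStationary}, the vacuum part (the system with $U\equiv 0$) is equivalent to the Riemannian Einstein equation for $\griem=V^2d\varphi^2+g$ on $\mbbS^1\times M$; this is the reason the hypotheses are phrased in terms of $\mathring{\griem}$.

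Next I would fix the diffeomorphism freedom by passing to a Bianchi/harmonic gauge relative to $\mathring{\griem}$, as in~\cite{ChDelayStationary,ACD}, so that the gauge-reduced Einstein operator becomes elliptic; the uniqueness ``modulo diffeomorphisms which are the identity at the boundary'' in the statement is precisely the residual freedom of this gauge. Working in weighted \holder spaces $C^{k,\alpha}_\delta$ adapted to the conformally compact structure, I would then form the nonlinear map $\Phi$ sending a triple $(V,g,U)$ with the prescribed boundary behaviour to the pair consisting of the gauge-reduced Einstein operator and the Maxwell operator \eqref{plan:max}, a solution of the field equations being a zero of $\Phi$.

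The key structural observation is that the linearisation $D\Phi$ at $(\mathring V,\mathring g,0)$ is \emph{block diagonal}. Since the electromagnetic source is quadratic in $dU$ it contributes nothing to the linearisation at $U\equiv 0$, so the $(V,g)$-block is merely the linearised gauge-reduced vacuum operator, which is an isomorphism by the non-degeneracy hypothesis—this being exactly the content established in paper~I. Conversely, because \eqref{plan:max} is linear and homogeneous in $U$, its linearisation with respect to $(V,g)$ vanishes at $U\equiv 0$, so the $U$-block is just the background scalar operator $\div_{\mathring g}(\mathring V^{-1}\grad_{\mathring g}\,\cdot\,)$. It remains to show that this operator, realised between the appropriate weighted spaces, is an isomorphism with $\hat U\in C^{k+2,\alpha}(\partial M)$ as the freely prescribable leading (Dirichlet-type) datum. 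This is an indicial-root computation of Lee/Mazzeo type: near $\partial M$ one has $V\to\infty$, $\tfrac1V\sim\rho$, and $g$ asymptotically hyperbolic, and a short calculation gives indicial exponents $0$ and $n-2$. The exponent $0$ carries $\hat U$, while the second solution together with the forced term $\propto\Delta_{\partial M}\hat U$ produces precisely the remainders recorded in \eqref{result}: $O(\rho)$ for $n=3$ (second root $s=1$), the resonant $O(\rho^2\ln\rho)$ for $n=4$ (second root $s=2$ coinciding with the forced order), and $O(\rho^2)$ for $n\ge5$.

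With both diagonal blocks isomorphisms, $D\Phi$ is an isomorphism and the implicit function theorem yields, for $\hat U$ sufficiently small in $C^{k+2,\alpha}(\partial M)$, a unique solution of the gauge-reduced system depending smoothly on $\hat U$; a Bianchi-identity argument (propagation of the gauge condition, again as in~\cite{ChDelayStationary}) shows that it solves the genuine Einstein-Maxwell system, and the asymptotics \eqref{20XI16.1}--\eqref{result} follow from the formal boundary expansion dictated by the indicial roots together with elliptic boundary regularity. I expect the main obstacle to be this last analytic point: pinning down the correct weights and mapping properties for the Maxwell operator at $\partial M$, where \eqref{plan:max} degenerates, so that $\hat U$ is genuinely the free leading datum and so that the quadratic source fed back into the Einstein block closes up in the weighted spaces, including the logarithmic resonance at $n=4$.
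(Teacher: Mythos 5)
Your proposal follows essentially the same route as the paper: harmonic-map/Bianchi gauge fixing relative to $\mathring{\griem}$, an implicit-function argument in weighted H\"older spaces whose $(V,g)$-block is the gauge-reduced vacuum isomorphism from paper I and whose scalar block is $\div_g(V^{-1}\nabla\,\cdot\,)$ with indicial roots $0$ and $n-2$ (the paper's Theorem~\ref{isofunction} with $s=-1$), followed by gauge propagation via the Bianchi identity and the isomorphism on one-forms, and an indicial/polyhomogeneous expansion yielding the $\rho^2\ln\rho$ resonance at $n=4$. The only, immaterial, organisational difference is that the paper first solves the linear Maxwell equation exactly for $U=U(\hat U,V,g)$ and then runs the implicit function theorem on $(V,g)$ alone, rather than on the full coupled system with its block-triangular linearisation.
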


\begin{remark}
  We have emphasised  the freedom to choose the leading-order behaviour $\hat U$ of $U$. As such,   the leading-order behaviour of both $\mathring V$ and $\mathring g$ is, similarly, freely prescribable near a non-degenerate solution~\cite{ACD2,ChDelayStationary}. Here one can, if desired, proceed in two steps: first, find the static solution with new fields $\mathring V$ and $\mathring g$ near a non-degenerate solution; {small such perturbations will preserve non-degeneracy. One can then use Theorem~\ref{maintheorem} at the perturbed static solution} to obtain a solution with the desired small $\hat U$.
  \qed
\end{remark}

The $(n+1)$-dimensional anti-de Sitter metric is non-degenerate in the
sense above (see eg. \cite{ACD2} appendix D),
so Theorem~\ref{maintheorem} provides in particular an
infinite dimensional family of solutions near that metric.  We note existence of further large classes of Einstein
metrics satisfying the non-degeneracy condition~\cite{Lee:fredholm,ACD2,mand1,mand2}.

The requirement of strict positivity of $\mathring V$ excludes black
hole solutions,
we are hoping to return to the construction of black-hole solutions in a near future.   In fact, this work was prompted by~\cite{HerdeiroRadu}, where static Einstein-Maxwell black holes solutions driven by the asymptotics of the electric field have been constructed numerically.

{Similarly to~\cite{HerdeiroRadu}, for generic $\widehat U$ the resulting space-time metric will have no isometries other than time-translations.}

{When the free boundary data $\hat U$ are smooth and when $\mathring \griem$ is conformally smooth,} the solutions constructed here will have a polyhomogeneous expansion at the conformal boundary at infinity. The proof of this is an immediate repetition of the argument presented in~\cite[Section~7]{ChDelayStationary}, where the reader can also find the definition of polyhomogeneity. The decay rates in \eq{20XI16.1}-\eq{result} have to be compared with the
{local-coordinates} leading-order behavior $\rho^{-2}$ both for ${\mathring V}^2$ and
${\mathring g}_{ij}$. A {more} precise version of \eq{20XI16.1}-\eq{result} in terms of
weighted function spaces {(we follow the notation in~\cite{Lee:fredholm,ChDelayStationary})} reads,  {in all dimensions},
\beal{result2}
 & (V-\mathring V)\in
C_{1}^{k+2,\alpha}( M)\,,\,\,(g-\mathring g)\in
C_{2}^{k+2,\alpha}(M,{\mathcal S}_2)\,,
 & \\ &
 \quad
U-\hat U \in C_{1}^{k+2,\alpha}( M)\,,
 &
\eeal{result3}
and the norms of the differences above are small in those spaces. To this one can add, in dimensions $n\ge 4$,
\beal{result2+}
  &
 \quad
U-\hat U - U_{\ln}\, \rho^2 \ln \rho  \in C_{2}^{k+2,\alpha}( M)
\,,
 &
\eea
with the function $U_{\ln} \in C^\infty( \mbbS^1\times \overline M) $ given by \eq{25XI16.21} below when $n=4$, and $  U_{\ln} \equiv 0$ in dimension $n\ge 5$.  Interestingly enough, in dimension $n=4$ the function $U_{\ln{}}$ vanishes only when $d U \equiv 0$,
in which case  the space-time is vacuum, see Remark~\ref{Ulncst} below.

 The proof of Theorem~\ref{maintheorem} can be found at the end of Section~\ref{s13XI16.1}.
{It follows closely~\cite{ChDelayStationary},} and proceeds through an implicit-function
argument, {with the key isomorphism properties of the
associated linearised operator borrowed from~\cite{Lee:fredholm,ChDelayStationary}, and presented in Section~\ref{sec:iso}.}

As already noted,  a constant $\hat U$ implies vacuum, by uniqueness of solutions. Hence the energy density of the Maxwell field behaves as $\rho^{4}$ for small $\rho$  if not identically zero (cf.~\eq{16XI16.12} below),  which leads to finite-total-energy configurations in space-dimensions $n=3$ and $4$, but infinite matter energy in higher dimensions.

\section{Definitions, notations and conventions}\label{sec:def}

Our definitions and conventions are identical to those in~\cite[Section~2]{ChDelayStationary}.
Here we simply recall that the  Lichnerowicz Laplacian acting on a symmetric
two-tensor field is defined as~\cite[\S~1.143]{besse:einstein}
$$
\Delta_Lh_{ij}=-\nabla^k\nabla_kh_{ij}+R_{ik}h^k{_j}+R_{jk}h^k{_i}-2R_{ikjl}h^{kl}\,.
$$
The operator $\Delta_L+2n$ arises naturally when  linearising the equation
\bel{EE}
\Ric(\griem)=-n \griem\,,
\ee
where $\Ric(\griem)$ is the Ricci curvature of $\griem$, at a solution.
We will say that a metric is {\it non-degenerate} if $\Delta_L+2n$ has no
$L^2$-kernel.

\section{Isomorphism theorems}\label{sec:iso}

In this section we recall two isomorphism results from~\cite[Section~3]{ChDelayStationary} and prove an isomorphism theorem on scalar fields, as needed in the remainder of this work. The reader might also want to consult the introductory comments in~\cite[Section~3]{ChDelayStationary}, which remain valid in the current context.

%
%

\subsection{An isomorphism on two-tensors}

We will need~\cite[Corollary~3.2]{ChDelayStationary}:

\begin{corollary}
 \label{isotordu}
Let $(\mbbS^1 \times M,V^2 d\varphi^2 + g)$ be an asymptotically hyperbolic non-degenerate  Riemannian manifold  with $\partial_\varphi V = 0= \partial _\varphi g $. Consider the map
$$
  (W,h)\mapsto (l(W,h),L(W,h))\,,
$$
where
\begin{eqnarray}
 \label{16XO16/2}
 \lefteqn{ l(W,h)
  =
  V\displaystyle{\big[
 (\nabla^*\nabla+2n+V^{-1}\nabla^*\nabla V+V^{-2}|dV|^2)
 W }
 }
 &&
\\
 &&
 +V^{-1}\nabla_jV\nabla^jW
 -V^{-1}\nabla^jV\nabla^kVh_{kj}+\langle
 \Hess_gV,h\rangle_g \big]
 \,,
  \nonumber
\end{eqnarray}
 and
\begin{eqnarray}
 \label{16XO16/1}
L_{ij}(W,h)&=&\frac{1}{2}\Delta_L h_{ij}+nh_{ij}-\frac{1}{2}V^{-1}\nabla^kV\nabla_kh_{ij}
  \\
&&+\frac{1}{2}V^{-2}(\nabla_iV\nabla^kVh_{kj}+\nabla_jV\nabla^kVh_{ki})
  \nonumber
  \\
&&-\frac{1}{2}V^{-1}(\nabla_i\nabla^kVh_{kj}+\nabla_j\nabla^kVh_{ki})
  \nonumber
\\
&&+2V^{-2}W (\Hess_g V)_{ij} -2V^{-3}\nabla_iV\nabla_jV W
 \,.
  \nonumber
\end{eqnarray}
Then $(l,L)$ is an isomorphism from
$C^{k+2,\alpha}_{\delta-1}(M)\times C^{k+2,\alpha}_{\delta}(
M,{\mathcal S}_2)$ to $C^{k,\alpha}_{\delta-2}(M)\times
C^{k,\alpha}_{\delta}( M,{\mathcal  S}_2)$ when $\delta\in(0,n)$.
\end{corollary}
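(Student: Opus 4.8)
The plan is to recognise the pair $(l,L)$ as the reduction, to the $\del_\varphi$-invariant sector, of a single geometric operator on the product, and then to invoke the Fredholm theory for that operator. Equip $\mbbS^1\times M$ with the Riemannian metric $\griem=V^2d\varphi^2+g$ and consider a $\del_\varphi$-invariant symmetric two-tensor of the form $\hat h=2VW\,d\varphi^2+h$, where $W$ is a function and $h$ a symmetric two-tensor on $M$. This $\hat h$ is precisely the linearisation of the map $(V,g)\mapsto\griem$ along the variation $V\mapsto V+W$, $g\mapsto g+h$. A direct computation of the Lichnerowicz operator of $\griem$ shows that the $d\varphi^2$-block and the $dx^idx^j$-block of $\tfrac{1}{2}(\Delta_L^{\griem}+2n)\hat h$ reproduce, up to the explicit powers of $V$ displayed in \eq{16XO16/2} and \eq{16XO16/1}, the expressions $l(W,h)$ and $L_{ij}(W,h)$. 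In other words $(l,L)$ is the restriction of $\tfrac{1}{2}(\Delta_L^{\griem}+2n)$ to $\del_\varphi$-invariant tensors, which is exactly the operator whose linearisation-of-Einstein origin is recalled in \eq{EE}.

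With this identification the corollary reduces to the mapping properties of $\Delta_L^{\griem}+2n$ on the asymptotically hyperbolic manifold $(\mbbS^1\times M,\griem)$. That operator is geometric and uniformly degenerate elliptic, so by the Fredholm theory for such operators on conformally compact manifolds \cite{Lee:fredholm} it is Fredholm as a map $C^{k+2,\alpha}_\delta\to C^{k,\alpha}_\delta$ for every weight $\delta$ avoiding its indicial roots. I would then compute those indicial roots and verify that the interval $\delta\in(0,n)$ contains none of them and yields Fredholm index zero. Injectivity follows from the hypothesis: $\Delta_L+2n$ is formally self-adjoint, and any element of its kernel with weight $\delta\in(0,n)$ decays fast enough to be square-integrable, so the non-degeneracy assumption (no $L^2$-kernel) forces the kernel to be trivial; combined with index zero this gives surjectivity, hence an isomorphism. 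The weight bookkeeping---the shift $\delta-1\mapsto\delta-2$ in the scalar slot against the unshifted $\delta\mapsto\delta$ in the tensor slot---is dictated by the factor $V\sim\rho^{-1}$ multiplying the bracket in \eq{16XO16/2} and by the $V^{-1}$, $V^{-2}$ coefficients, which convert the natural weight of the block $VW$ into the stated weights for $W$.

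The genuinely analytic content, and the main obstacle, is the Fredholm statement together with the indicial-root computation that pins the isomorphism range to exactly $(0,n)$, and the identification of the operator's kernel with the $L^2$-kernel controlled by non-degeneracy; this is the material I would import essentially verbatim from \cite{Lee:fredholm} and \cite[Section~3]{ChDelayStationary}. The reduction step itself is bookkeeping, but not entirely mechanical: the cross-terms coupling $W$ and $h$---those involving $\Hess_gV$ and the $\nabla V\nabla V$ contractions in \eq{16XO16/2}-\eq{16XO16/1}---are exactly what make the system genuinely coupled rather than block-diagonal, and checking that the $\del_\varphi$-reduction is exact and that self-adjointness and ellipticity survive it is where a stray factor or sign is most likely to slip in.
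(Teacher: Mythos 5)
The paper offers no proof of this corollary---it is imported verbatim as Corollary~3.2 of \cite{ChDelayStationary}---and your sketch, identifying $(l,L)$ with the restriction of $\tfrac12(\Delta_L+2n)$ on $(\mbbS^1\times M,\griem)$ to $\partial_\varphi$-invariant perturbations $2VW\,d\varphi^2+h$ and then invoking Lee's Fredholm theory plus the non-degeneracy hypothesis to get an isomorphism for $\delta\in(0,n)$, is exactly the argument given there. The one step you gloss over, and which Lee's theory supplies, is that a kernel element in $C_\delta$ with $0<\delta\le n/2$ is not manifestly in $L^2$; one uses the fact that within the Fredholm range the kernel is independent of the weight and coincides with the $L^2$-kernel, so the non-degeneracy assumption does apply.
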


\subsection{An  isomorphism on one-forms}
 \label{ss7XII16.1}

We will also need the following sharper version of~\cite[Corollary~3.5]{ChDelayStationary}, where we note that the isomorphism  given in Theorem 3.4  and Corollary 3.5 of~\cite{ChDelayStationary} are in fact valid for a larger interval of weights, as can be established by a more careful inspection of the arguments presented in~\cite{ChDelayStationary}:

\begin{corollary}
 \label{cor:isoform}
Let $k\in \N$, $\alpha \in (0,1)$.
Under the  hypotheses of Corollary~\ref{isotordu}, suppose moreover that the Ricci tensor of $V^2 d\varphi^2 + g$ is negative. Consider the operator
$$
\Omega_i \mapsto
{B}(\Omega)_i+R_{ij}\Omega^j-V^{-1}\nabla_i\nabla^jV\Omega_j=:{\mathcal
B}(\Omega)_i\,,
$$
where
$$
{
B}(\Omega)_i:=\nabla^k\nabla_k\Omega_i+V^{-1}\nabla^kV\nabla_k\Omega_i
 -V^{-2}\nabla_iV\nabla^kV\Omega_k\,.
 $$
Then $\mathcal B$ is an isomorphism from $C^{k+2,\alpha}_{\delta}(
M,{\mathcal T}_1)$ to $C^{k,\alpha}_{\delta}(M,{\mathcal T}_1)$
when
$$\left|\delta-\frac{n}{2}\right|<\frac{n+2}2.$$
\end{corollary}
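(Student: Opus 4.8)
The plan is to recognise $\mathcal B$ as a geometric Laplace-type operator on the warped product $(\mbbS^1\times M,\griem)$, with $\griem = V^2 d\varphi^2 + g$, and then to combine the Fredholm theory already used for Corollary~\ref{isotordu} with a Bochner argument that exploits the new hypothesis $\Ric(\griem)<0$. First I would carry out the identification. Regard a one-form $\Omega=\Omega_i\,dx^i$ on $M$ as a $\varphi$-independent one-form on $\mbbS^1\times M$ with no $d\varphi$-component. A direct computation with the Christoffel symbols of $\griem$ (only $\bar\Gamma^0_{0i}=V^{-1}\nabla_iV$ and $\bar\Gamma^i_{00}=-V\nabla^iV$ are new) shows that this subspace is preserved by the connection Laplacian $\bar\nabla^*\bar\nabla$ of $\griem$ and that its spatial part equals $-B(\Omega)_i$, the two $V$-dependent terms in $B$ arising precisely from $\bar\Gamma^i_{00}$ and $\bar\Gamma^0_{0i}$. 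Moreover the warped-product Ricci tensor satisfies $\Ric(\griem)_{ij}=R_{ij}-V^{-1}\nabla_i\nabla_jV$ on horizontal vectors and vanishes on mixed components, so the zeroth-order part of $\mathcal B$ is exactly $\Ric(\griem)(\Omega,\cdot)$. Hence, on this invariant subspace,
$$
\mathcal B(\Omega)=-\bar\nabla^*\bar\nabla\Omega+\Ric(\griem)(\Omega,\cdot)\,,
$$
i.e. $\mathcal B$ is the restriction of the standard Weitzenb\"ock-type operator on one-forms of $(\mbbS^1\times M,\griem)$. In particular $\mathcal B$ is formally self-adjoint with respect to the weighted measure $V\,dV_g$ on $M$, the push-forward of $dV_{\griem}$ under $\varphi$-integration.

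For the Fredholm and index statements I would appeal to the theory of~\cite{Lee:fredholm}, in the form already used in~\cite[Section~3]{ChDelayStationary}. Since $\griem$ is asymptotically hyperbolic with $\Ric(\griem)\to-n\,\griem$, the operator $\mathcal B$ is asymptotic to $-\bar\nabla^*\bar\nabla-n$ on one-forms, whose indicial roots on the hyperbolic model are $\delta=-1$ and $\delta=n+1$. Thus $\mathcal B\colon C^{k+2,\alpha}_{\delta}(M,\mathcal T_1)\to C^{k,\alpha}_{\delta}(M,\mathcal T_1)$ is Fredholm of index zero for $\delta$ strictly between these roots, i.e. for $|\delta-\tfrac n2|<\tfrac{n+2}2$; the index is constant on the root-free interval and vanishes at the self-adjoint weight $\delta=n/2$. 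This is exactly the ``larger interval of weights'' referred to above, obtained by reading off the indicial roots explicitly rather than using the cruder bounds recalled from~\cite{ChDelayStationary}.

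It then remains to prove injectivity, and this is where the negativity hypothesis enters. Given $\Omega\in C^{k+2,\alpha}_{\delta}(M,\mathcal T_1)$ with $\mathcal B\Omega=0$ and $\delta\in(-1,n+1)$, boundary-regularity and indicial analysis (no indicial root lies in $(\delta,n+1)$) let me upgrade the decay of $\Omega$ to any rate below $\rho^{\,n+1}$, hence in particular $\Omega\in L^2(V\,dV_g)$. Pairing $\mathcal B\Omega=0$ with $\Omega$ and integrating by parts — the decay being fast enough to annihilate all boundary terms — gives
$$
0=\int_M\langle\mathcal B\Omega,\Omega\rangle\,V\,dV_g=-\int_M|\bar\nabla\Omega|^2\,V\,dV_g+\int_M\Ric(\griem)(\Omega,\Omega)\,V\,dV_g\,.
$$
Since $\Ric(\griem)<0$, both terms on the right are non-positive, forcing $\Omega=0$. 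Because $\mathcal B$ is self-adjoint and the weight interval is symmetric about $n/2$, the cokernel at weight $\delta$ is identified with the kernel at the dual weight $n-\delta$, which again lies in $(-1,n+1)$ and is therefore trivial by the same argument; together with index zero this yields the asserted isomorphism.

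I expect the main obstacle to be the analytic bookkeeping rather than the geometry: verifying that kernel elements genuinely improve their decay up to the $L^2$ threshold across the full open interval, and that no boundary terms survive in the integration by parts. The geometric input $\Ric(\griem)<0$ is precisely what makes the Bochner identity sign-definite, and is the one place where the extra hypothesis of Corollary~\ref{cor:isoform} (absent from Corollary~\ref{isotordu}) is indispensable.
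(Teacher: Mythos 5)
Your argument is essentially the paper's: the paper offers no proof of this corollary beyond a citation of Theorem~3.4 and Corollary~3.5 of \cite{ChDelayStationary} (with the remark that a closer inspection of that argument yields the larger weight interval), and the proof there is precisely your identification of $\mathcal B$ with $-\bar\nabla^*\bar\nabla+\Ric(\griem)$ acting on $\varphi$-independent one-forms tangent to $M$ on the warped product, followed by Lee's Fredholm theory at the self-adjoint weight and a Bochner/integration-by-parts argument using $\Ric(\griem)<0$ to kill the $L^2$-kernel. The one imprecision is your assertion that ``the'' indicial roots are $-1$ and $n+1$: those are the roots of the block tangent to the conformal boundary, while the $d\rho/\rho$-block has roots $\tfrac{n\pm\sqrt{n^2+8n}}{2}$, which for $n\ge 2$ lie outside $[-1,n+1]$, so the indicial radius is still $\tfrac{n+2}{2}$ and your conclusion is unaffected.
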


\subsection{An isomorphism on functions}

If we assume that $V^2d\varphi^2+g$ is a static asymptotically
hyperbolic metric on $\mbbS^1\times M$, then it is easy to check that
\bel{30XI16.1}
 \mbox{$V^{-2}|dV|^2\to 1$ and $V^{-1}\nabla^i\nabla_iV\to n$}
\ee
as the conformal boundary is approached.  We will need an isomorphism property for the following
operator acting on functions {with $s=-1$;  in~\cite{ChDelayStationary} the result has already been established with $s=-3$ and $s=3$, so for future reference it appears useful to consider all values of $s$}:
$$
\sigma\mapsto {\mathcal
T}_s\sigma:=V^{-s}\nabla^i(V^{s}\nabla_i\sigma)=
\nabla^i\nabla_i\sigma+sV^{-1}\nabla^iV\nabla_i\sigma\,.
$$

\begin{theorem}\label{isofunction}
{
Let $(M,g)$ be an $n$-dimensional Riemannian manifold with an asymptotically hyperbolic metric with $V>0$  and assume that \eq{30XI16.1} holds.
}
%
Let $s\neq 1-n$ and suppose that
 $$
 \frac{s+n-1-|s+n-1|}2<\delta<\frac{s+n-1+|s+n-1|}2
 \,.
$$
If $s\geq -\frac{n-1}{2}$
then $\mathcal T_s$ is an isomorphism from
$C^{k+2,\alpha}_{\delta}( M)$ to $C^{k,\alpha}_{\delta}(M)$.
If $s<-\frac{n-1}{2}$,
then ${\mathcal T}_s$ is an
isomorphism from $C^{k+2,\alpha}_{\delta}( M)/\R$ to
\bel{intcond}\Big\{\sigma\in C^{k,\alpha}_{\delta}(M): \int_M V^{s}\sigma=0\Big\}\,.\ee
\end{theorem}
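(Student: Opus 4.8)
The plan is to treat $\mathcal T_s$ as a scalar Laplace-type operator on an asymptotically hyperbolic manifold and to apply the Fredholm and isomorphism machinery of Lee and of paper~I (\cite{Lee:fredholm,ChDelayStationary}), with the only new ingredient being the bookkeeping of the indicial roots as functions of $s$. Since $\mathcal T_s\sigma=\nabla^i\nabla_i\sigma+sV^{-1}\nabla^iV\nabla_i\sigma$ is a geometric Laplacian plus a first-order term whose coefficient $sV^{-1}\nabla^iV$ is controlled near the boundary by \eq{30XI16.1}, the operator fits the class of uniformly degenerate (geometric) operators for which the weighted-H\"older mapping theory applies. First I would compute the indicial polynomial: near the conformal boundary one uses a boundary-defining function $\rho$ with $|d\rho|^2_{\rho^2 g}\to 1$, substitutes $\sigma\sim\rho^{\delta}$ into $\rho^2 \mathcal T_s$, and reads off the characteristic equation. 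Using \eq{30XI16.1}, which tells us that $V$ itself behaves like a first eigenfunction (so $V\sim \rho^{-1}$ and $V^{-1}\nabla^i V\,\nabla_i\rho^\delta$ contributes a term proportional to $s\delta$), the indicial equation should come out to be
\[
 \delta(\delta-(n-1)) + s\,\delta = \delta\bigl(\delta-(n-1-s)\bigr)=0\,,
\]
so that the two indicial roots are $\delta=0$ and $\delta=n-1-s=-(s+n-1)+ (n-1) \cdots$; more precisely the roots are symmetric about $\tfrac{s+n-1}{2}$ and are given by $\tfrac{s+n-1\pm|s+n-1|}{2}$. This is exactly the open interval singled out in the statement, which is the gap between the two indicial roots and hence the range of weights for which $\mathcal T_s$ is Fredholm of index zero with trivial cokernel obstruction.

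Next I would separate the two regimes. For $s\ge -\tfrac{n-1}{2}$ the relevant root $0$ lies at the left end of the admissible interval, and one shows injectivity on $C^{k+2,\alpha}_\delta(M)$ directly: if $\mathcal T_s\sigma=0$ with $\sigma$ decaying like $\rho^\delta$, $\delta>0$, then $\sigma\to 0$ at the boundary, and integrating $V^s\,\sigma\,\mathcal T_s\sigma$ over $M$ and using the divergence form $\mathcal T_s\sigma=V^{-s}\nabla^i(V^s\nabla_i\sigma)$ gives
\[
 0=\int_M V^s\,\sigma\,\mathcal T_s\sigma
  =-\int_M V^s\,|d\sigma|^2\,,
\]
with no boundary term because the decay of $\sigma$ in this weight range kills it; hence $\sigma$ is constant, and the constant must vanish since $\sigma\to 0$. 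Combined with the self-adjointness of $\mathcal T_s$ with respect to the measure $V^s\,d\mu_g$ (so that injectivity on one side forces surjectivity on the dual side at the conjugate weight), the Fredholm theory upgrades injectivity to the claimed isomorphism $C^{k+2,\alpha}_\delta(M)\to C^{k,\alpha}_\delta(M)$.

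For the complementary range $s<-\tfrac{n-1}{2}$, the root $0$ now sits at the \emph{right} end of the weight window, so constants are in the kernel and one genuinely has to quotient them out, producing the factor $/\R$ on the domain and, dually, the single integral constraint $\int_M V^s\sigma=0$ on the target, which is precisely the cokernel condition dictated by integrating $\mathcal T_s\sigma$ against the constant function in the weighted pairing. The exclusion $s\ne 1-n$ is exactly the resonance case where the two indicial roots collide (the window degenerates to a point), so no open interval of weights is available; I would simply note this is the excluded borderline. The main obstacle I anticipate is not the functional analysis, which is essentially inherited, but rather verifying carefully that the first-order term $sV^{-1}\nabla^iV\nabla_i$ produces exactly the asserted indicial roots and contributes no lower-order surprises to the boundary term; here one must use \eq{30XI16.1} to replace $V^{-1}\nabla^i\nabla_i V$ and $V^{-2}|dV|^2$ by their boundary limits with uniformly controlled errors, and confirm that the weighted boundary integrals vanish throughout the open interval. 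Once the indicial roots and the self-adjointness with respect to $V^s\,d\mu_g$ are pinned down, both cases follow from the Fredholm alternative exactly as in~\cite[Section~3]{ChDelayStationary}.
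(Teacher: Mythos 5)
Your overall strategy (indicial roots, Fredholm theory in the weight window, kernel via integration by parts) points in the right direction and is broadly the route the paper takes, but the proposal omits the step that actually carries the proof. In Lee's framework, which is what you invoke, lying strictly between the indicial roots does \emph{not} by itself make $\mathcal T_s$ Fredholm of index zero on $C^{k,\alpha}_\delta$: Theorem C of \cite{Lee:fredholm} requires a formally self-adjoint geometric operator together with the asymptotic $L^2$ estimate $\|u\|_{L^2}\le C\|T_su\|_{L^2}$ near the boundary (condition (1.4) there), and it is the constant in that estimate, not the indicial roots as such, that determines the admissible weight interval. Your observation that $\mathcal T_s$ is self-adjoint for the measure $V^s d\mu_g$ is correct but does not plug into Lee's theorem as stated; the paper's device is to conjugate, $\sigma=V^{-s/2}f$, obtaining the genuinely formally self-adjoint operator $T_s$ of \eq{ZcalZ}, and then to prove the estimate with $C^{-1}=(s+n-1)^2/4$ from the weighted Poincar\'e inequality of Lemma~\ref{Lnew} combined with \eq{30XI16.1}. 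This conjugation-plus-estimate is the core of the paper's argument and is absent from your proposal. (For $s+n-1>0$ the paper can instead quote \cite{AndChDiss} directly.)

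Two further concrete problems. First, your injectivity argument by integrating $V^s\sigma\,\mathcal T_s\sigma$ does not go through for every $\delta$ in the window: near the boundary $V^s|d\sigma|^2\,d\mu_g\sim\rho^{2\delta-s-n}\,d\rho$, so the energy integral converges (and the boundary term dies) only for $\delta>\tfrac{s+n-1}{2}$, i.e.\ only on the upper half of the interval. The paper sidesteps this by identifying the $C_\delta$-kernel with the $L^2$-kernel via Lee's Theorem C(c) and integrating by parts for $H^2\cap L^2$ functions, where no boundary term arises. Second, your indicial computation has a sign slip: $V^{-1}\nabla^iV\nabla_i\rho^\delta\to-\delta\rho^\delta$, so the indicial equation is $\delta\bigl(\delta-(n-1)-s\bigr)=0$ with roots $0$ and $s+n-1$, not $n-1-s$ (you then silently switch to the correct roots). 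Relatedly, your explanation of the dichotomy --- that the root $0$ moves from one end of the window to the other --- occurs at $s=1-n$, not at $s=-\tfrac{n-1}{2}$, so as written your argument would not reproduce the split asserted in the theorem; the paper's dichotomy is instead governed by whether the conjugated constant $V^{s/2}$ belongs to $L^2$, which is what controls kernel and cokernel in Lee's theorem.
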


\begin{proof}
When $s+n-1>0$, we can  use
\cite[Theorem~7.2.1 (ii) and Remark (i), p.~77]{AndChDiss}
to conclude.  As we want the result for any $s\neq 1-n$, for the remaining cases we  appeal to the results of
Lee~\cite{Lee:fredholm}.  For this we need a formally
self-adjoint operator, so we set $\sigma=V^{-\frac{s}{2}}f$, thus
\bel{ZcalZ}
 {\mathcal
T}_s\sigma=V^{{-}\frac{s}{2}}\left[\nabla^i\nabla_if-\frac s2\left((\frac s2-1)
V^{-2}|dV|^2+V^{-1}\nabla^i\nabla_iV\right)f\right]=:
 V^{{-}\frac{s}{2}}T_sf\,.
\ee
{By assumption} $V^{-2}|dV|^2\to 1$ and
$V^{-1}\nabla^i\nabla_iV\to n$ at the conformal boundary,
leading to the following indicial exponents  {for $T_s$:}
$$
  \delta=\frac{n-1\pm|s+n-1|}2\,.
$$
We want to show that $T_s$ satisfies condition (1.4) of
\cite{Lee:fredholm},
\bel{Lec1.4}\|u\|_{L^2}\le C \|{
T}_su\|_{L^2}
 \,,
\ee
for smooth $u$ compactly supported in a
sufficiently small open set ${\mathcal U}\subset M$ such that
$\overline{\mathcal U}$ is a neighborhood of $\partial M$.  Let us recall  the following, well known result
(see eg.~\cite[Lemma~3.8]{ChDelayStationary}):

\begin{lemma}\label{Lnew}
On an asymptotically hyperbolic manifold $(M,g)$ with boundary
defining function $\rho$ we have, for all compactly supported $C^2$
functions,
$$
\int u\nabla^*\nabla u\geq
\left(\frac{n-1}{2}\right)^2\int(1+O(\rho)) u^2.
$$
\end{lemma}

Lemma~\ref{Lnew} combined with the hypothesis (\ref{30XI16.1}) shows that
$$
\|u\|_{L^2}\|T_su\|_{L^2}\ge -\int uT_su
 \geq
 \int\Big(\frac{(n-1)^2}{4}+\frac s2(\frac s2+n-1)+o(1) \Big)u^2
 \,
 ,
$$
from which it follows that $ T_s$ satisfies indeed \eq{Lec1.4}
with
$$
 C^{-1}=\frac{(s+n-1)^2}{4}\,.
$$

We recall that the critical fall-off for a function to belong to $L^2$ is $O(\rho^{\frac{n-1}2})$. This can be used to show that the
$L^2$-kernel of $T_s$
equals
$$
\ker T_s=V^{\frac{s}{2}}\R\cap L^2=
\left\{\begin{array}{l}\{0\} \mbox{ if }s\geq -\frac{n-1}2
 \,,
 \\
V^{\frac{s}{2}}\R \mbox{ if } s<-\frac{n-1}2
 \,.
  \end{array}\right.
$$
Indeed, assume that $f$ is in the $L^2$-kernel of $T_s$. By elliptic
regularity (see  \cite[Lemma 4.8]{Lee:fredholm} for instance)
 $f$ is $H^2$ on $M$.  {This implies in particular that no boundary terms arise in the following integration by parts:}
\begin{eqnarray*}
0 & = & - \int fT_s f =-\int    f V^{-\frac s2}
\nabla^i[V^{s}\nabla_i(V^{-\frac s 2}f)]\\
& = & \int    V^{s}|\nabla (V^{-\frac s2}f)|^2.
\end{eqnarray*}
 So $V^{-\frac s2}f$ is a
constant.  Using~\cite{Lee:fredholm}, Theorem C(c), we find:

\begin{itemize}
  \item
If $s\geq \frac{n-1}2$ then the kernel (and then the cokernel) of $T_s$ is trivial
so $T_s$ is an isomorphism from
$C^{k+2,\alpha}_{\delta-\frac s2}( M)$ to $C^{k,\alpha}_{\delta-\frac s2}( M)$.

 \item
If $s<-\frac{n-1}{2}$, then $T_s$ is an isomorphism from
$C^{k+2,\alpha}_{\delta-\frac s2}( M)/V^{s/2}\R$ to the orthogonal to the kernel:
$$
 \Big\{f\in C^{k,\alpha}_{\delta-\frac s2}(M): \int_M V^{s/2}f=0\Big\}\,.
$$
\end{itemize}

The conclusion follows for $\mathcal T_s$ if we recall that $\sigma=V^{-\frac s2}f$
is in $C^{k,\alpha}_{\delta}(M)$ iff $f\in C^{k,\alpha}_{\delta-\frac s2}(M)$.
\end{proof}

\section{The equations}

Rescaling the metric to achieve a convenient normalisation of the
cosmological constant,
\bel{16XI16.11}
\Lambda=-\frac{n(n-1)}2
 \,,
\ee
the vacuum Einstein-Maxwell equations for a metric
satisfying \eq{gme1}-\eq{gme2} read (see Appendix~\ref{s15XI16.1})
\begin{equation}\label{mainequation}
\left\{\begin{array}{l} V(\nabla^*\nabla V+n V)=-\frac {n-2}{n-1} |dU|_g^2
\,,\\
    \Ric(g)+n g-V^{-1}\Hess_gV=V^{-2}\left(-dU\otimes dU+\frac{1}{n-1}|dU|_g^2g\right)\,,
\\
\div_g (V^{-1} \nabla U)=0\,.
\end{array}\right.
\end{equation}

\subsection{The linearised equation}\label{sec:line}

{We use the symbol $\Tr$, or $\Tr_g$ when the metric could be ambiguous, to denote the trace.}
As in \cite{ChDelayStationary}  we set
$$
 \grav h=h-\frac{1}{2}\Tr_g hg,\,\,\, (\div
 h)_i=-\nabla^kh_{ik},\,\,\,
 (\div^*w)_{ij}=\frac{1}{2}(\nabla_iw_j+\nabla_jw_i)
$$
(note the geometers' convention, in which we have a negative sign in the definition of
divergence).
We  consider the operator from the set of functions times
symmetric two tensor fields to itself, defined as
$$
\left(\begin{array}{l}V\\g\end{array}\right)\mapsto
\left(\begin{array}{l}V(\nabla^*\nabla
V+nV)\\\Ric(g)+ng-V^{-1}\Hess_g V\end{array}\right)\,.
$$
{(To avoid ambiguities, $\nabla^*\nabla \equiv -\nabla^i \nabla_i =: - \Delta$.)}
The two components of its linearisation at $(V,g)$ are
\begin{eqnarray*}
p(W,h)
 & = & V\Big[(\nabla^*\nabla+2n+V^{-1}\nabla^*\nabla V) W
+ \langle \Hess_gV,h\rangle_g
\\
 &&
 -\langle \div\grav h,dV\rangle_g\Big]\,,
\\
 P_{ij}(W,h)
 &=&\frac{1}{2}\Delta_Lh_{ij}+nh_{ij}+\frac{1}{2}V^{-1}\nabla^kV(\nabla_ih_{kj}+
\nabla_jh_{kj}-\nabla_kh_{ij})
\\
&&
 -(\div^*
\div\grav h)_{ij}+V^{-2}W(\Hess_gV)_{ij}-V^{-1}(\Hess_g W)_{ij}\,.
\end{eqnarray*}
 It turns out to be convenient to introduce the one-form
$$
w_j=V^{-1}\nabla^kVh_{kj}+\nabla^kh_{kj}-\frac{1}{2}\nabla_j(\Tr
h)-V^{-1}\nabla_jW-V^{-2}\nabla_jV W\,,
$$
which allows us  to rewrite  $P(W,h)$ as 
\begin{eqnarray*}
P(W,h)
&=&L(W,h)+\div^*w\,,
\end{eqnarray*}
where $L$ is as in Corollary~\ref{isotordu}.  Similarly, $p(W,h)$ can
be rewritten as
\begin{eqnarray*}
p(W,h)
&=&l(W,h)+V\langle w,dV\rangle_g\,,
\end{eqnarray*}
with $l$ given by \eq{16XO16/2}.

\subsection{The modified equation}
 \label{secjauge}

{In Section~\ref{s13XI16.1} we will use the implicit function theorem to construct our
solutions, using the observation of~\cite{ChBActa}, how to obtain a well-behaved equation by adding ``gauge
fixing terms".} We choose those
terms as in~\cite{ChDelayStationary}, appealing to harmonic maps for the vacuum Einstein
equations in one dimension higher.

Indeed, we will be solving the following  system of
equations
\begin{equation}\label{jaugeequation}
\left\{\begin{array}{l}
 V(\nabla^*\nabla V+n V +\langle  \Omega,dV\rangle)+\frac {n-2}{n-1} |dU|_g^2=0
\,,\\
    \Ric(g)+n g-V^{-1}\Hess_gV+\div^*\Omega\\
		\hspace{3cm}-V^{-2}\left(-dU\otimes dU+\frac{1}{n-1}|dU|_g^2g\right)=0\,,
\\
\div_g (V^{-1} \nabla U)=0\,,
\end{array}\right.
\end{equation}
where
\newcommand{\tGamma}{ {\widetilde\Gamma}}%
\newcommand{\zGamma}{\mathring\Gamma}%
\newcommand{\hGamma}{\hat\Gamma}%
\newcommand{\tilg}{ {\mathring g}}%
\newcommand{\wg}{\hat g}%
\newcommand{\znabla}{\mathring\nabla}%
\newcommand{\tilV}{ {\mathring V}}%
\newcommand{\zV}{\tilV }%
\begin{eqnarray}
 \label{Omegaeq}
 -\Omega_j&\equiv&
 -\Omega(V,g,\tilV ,\tilg )_j
\\
 \nonumber
&:=&
\griem_{j\mu}\griem^{\alpha \beta} (\Gamma(\griem)^\mu_{\alpha\beta}- \Gamma(\mathring{\griem})^\mu_{\alpha\beta})
\\
& = &
{g}{}^{\ell m} (\znabla_m {g}_{j\ell}- \frac 12 \znabla_jg_{\ell m}) + V^{-2}g_{jk}(\zV\znabla^k \zV - V\nabla^k V)
 \,,
 \nonumber
\end{eqnarray}
and
where $\mathring\nabla$-derivatives are relative to the metric $
  \mathring g
 $,
with Christoffel symbols
$\zGamma^\alpha_{\beta\gamma}$, Latin indices run from $0$ to
$n$, and $\griem:=V^2(dx^0)^2+g$ with Christoffel symbols
$\Gamma(\griem)^\alpha_{\beta\gamma}$, while the
$\Gamma(\mathring{\griem})^\alpha_{\beta\gamma}$'s are the Christoffel symbols of the
metric $\mathring{\griem}$.

The derivative of $\Omega$ with respect to $(V,g)$ at $(\zV,\tilg )$ is
$$
 D_{(V,g)}\Omega(\zV,\tilg,\zV,\tilg )(W,h)=-w,
$$
where $w$ is the one-form defined in
Section~\ref{sec:line} with $(V,g)$ replaced with $(\zV,\tilg )$.  Thus, the
linearisation of $(q,Q)$ at $(\zV,\tilg )$ is
$$ D(q,Q)(\zV,\tilg )=(l,L)\,,$$ where $(l,L)$ is the operator defined in
Section~\ref{sec:line} with $(V,g)$ replaced with $(\zV,\tilg )$.  We will
show that, under reasonable conditions, solutions of
  \eq{jaugeequation} are solutions of
\eq{mainequation}).

\begin{proposition}
 \label{prop:solmodgivesol}
If $(U,V,g)$ solves
\eq{jaugeequation} then $\Omega$ is in the kernel of ${\mathcal B}$.   If moreover
 $ \Omega\in C^{2,\alpha}_\delta$  for some $\delta>-1$,
then $\Omega \equiv 0$, so that   $(U,V,g)$ solves \eq{mainequation}.
\end{proposition}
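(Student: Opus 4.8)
The plan is to recognise \eq{jaugeequation} as the dimensionally reduced form of a single DeTurck-gauged Einstein--Maxwell system on the $(n+1)$-dimensional manifold $\mbbS^1\times M$ carrying the warped-product metric $\griem = V^2\,d\varphi^2 + g$, and then to extract a linear elliptic equation for $\Omega$ from the contracted Bianchi identity. For a warped product one has $\Ric(\griem)_{ij} = R_{ij} - V^{-1}\Hess_g(V)_{ij}$ and $\Ric(\griem)_{\varphi\varphi} = V\,\nabla^*\nabla V$, so the first two lines of \eq{jaugeequation} are precisely the $\varphi\varphi$- and $ij$-components of $\Ric(\griem) + n\griem + \div^*_\griem\Omega = T$, where $\Omega$ from \eq{Omegaeq} is regarded as the $\varphi$-independent, purely spatial one-form, $T$ is the trace-reversed Maxwell stress-energy of $F = d(U\,d\varphi)$ (as recorded in Appendix~\ref{s15XI16.1}), and I use that the only mixing Christoffel symbols of $\griem$ are $\Gamma^\varphi_{i\varphi} = V^{-1}\nabla_i V$ and $\Gamma^k_{\varphi\varphi} = -V\nabla^k V$, whence $(\div^*_\griem\Omega)_{\varphi\varphi} = V\langle\Omega,dV\rangle$ and $(\div^*_\griem\Omega)_{ij} = (\div^*_g\Omega)_{ij}$. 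The third line of \eq{jaugeequation} is the $\varphi$-component of $\div_\griem F = 0$, while $dF = 0$ holds identically.

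With this reformulation the first assertion follows from the twice-contracted Bianchi identity. Writing the gauged equation as $\Ric(\griem) + n\griem - T + \div^*_\griem\Omega = 0$ and applying $\div_\griem\grav_\griem$, every geometric term drops out: $\div_\griem\grav_\griem(\Ric(\griem)) = \div_\griem(\mathrm{Ein}(\griem)) = 0$ by Bianchi, $\grav_\griem(n\griem) = \Lambda\griem$ is parallel, and $\div_\griem\grav_\griem(T) = \div_\griem(\text{Maxwell stress-energy}) = 0$ by the Maxwell equations — this is exactly where the third equation of \eq{jaugeequation} enters. Hence $\div_\griem\grav_\griem\div^*_\griem\Omega = 0$. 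A short Weitzenböck computation gives, for any one-form, $\div_\griem\grav_\griem\div^*_\griem\Omega = -\tfrac12\big(\Delta_\griem\Omega + \Ric(\griem)\,\Omega\big)$, the gradient-of-divergence contributions cancelling.

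It then remains to reduce $\Delta_\griem\Omega + \Ric(\griem)\Omega$ to the operator $\mathcal B$ of Corollary~\ref{cor:isoform}. Using $\Omega_\varphi = 0$ and $\varphi$-independence, the Christoffel symbols above yield $\Delta_\griem\Omega_i = \nabla^k\nabla_k\Omega_i + V^{-1}\nabla^k V\nabla_k\Omega_i - V^{-2}\nabla_i V\nabla^k V\,\Omega_k = B(\Omega)_i$, while $\Ric(\griem)_{i\mu}\Omega^\mu = R_{ij}\Omega^j - V^{-1}\nabla_i\nabla^j V\,\Omega_j$; together these are exactly $\mathcal B(\Omega)_i$. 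Thus the spatial components of the identity of the previous paragraph read $\mathcal B(\Omega) = 0$, proving the first claim (the $\varphi$-component is an additional relation among the $\Omega_k$ that we do not need). For the vacuum case this identification is carried out in~\cite{ChDelayStationary}; the Maxwell source changes nothing here, as it is annihilated by $\div_\griem\grav_\griem$.

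Finally, if $\Omega \in C^{2,\alpha}_\delta$ with $\delta > -1$, I would invoke the injectivity half of Corollary~\ref{cor:isoform}: $\mathcal B$ is an isomorphism of $C^{k+2,\alpha}_{\delta'}(M,{\mathcal T}_1)$ onto $C^{k,\alpha}_{\delta'}(M,{\mathcal T}_1)$ whenever $|\delta'-n/2| < (n+2)/2$, that is for $\delta'\in(-1,n+1)$. Since the weighted \holder spaces are nested, $\Omega$ lies in $C^{2,\alpha}_{\delta'}$ for any $\delta'\in(-1,\min(\delta,n/2)]$, and on that space $\mathcal B$ is injective, forcing $\Omega\equiv 0$; \eq{jaugeequation} then collapses to \eq{mainequation}. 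The threshold $\delta>-1$ is sharp precisely because $-1$ is the lower indicial root entering Corollary~\ref{cor:isoform}. The hard part will be the bookkeeping of the first step: checking cleanly that the three scalar/tensor equations assemble into the single gauged equation with the correct trace-reversed source, and that $\div_\griem\grav_\griem T = 0$ is genuinely equivalent to the Maxwell equation invoked, so that no spurious source survives the Bianchi contraction. By contrast, the warped-product reduction of $\div_\griem\grav_\griem\div^*_\griem$ to $\mathcal B$, though the technical heart, is mechanical once the Christoffel symbols are recorded.
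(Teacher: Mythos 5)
Your argument is correct and takes essentially the same route as the paper's primary proof: the authors likewise note that $\mathcal B(\Omega)$ was constructed in~\cite{ChDelayStationary} precisely so that it vanishes when the gauged $(n+1)$-dimensional equation holds and the $(n+1)$-dimensional energy-momentum tensor is divergence-free (which here is exactly the last equation of \eq{jaugeequation}), and they then conclude via Corollary~\ref{cor:isoform}; they also record, as an alternative, the explicit $n$-dimensional verification that the residual source term $\beta$ in $\div E(g)=\tfrac12\mathcal B(\Omega)+\beta$ vanishes. The one detail to watch in your bookkeeping is that the source appearing in \eq{jaugeequation} is the trace-reversed \emph{Lorentzian} stress tensor of $F=d(U\,dt)$, which differs by a sign from the Riemannian Maxwell stress tensor of $d(U\,d\varphi)$ on $\mbbS^1\times M$ (since $|F|^2$ changes sign under the Wick rotation); this does not affect the Bianchi/divergence argument or the conclusion $\mathcal B(\Omega)=0$.
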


\begin{proof}
{
A standard adaptation of the argument in~\cite{ChDelayStationary} gives the result; we sketch this for completeness. Set}
\begin{eqnarray*}
 &
 a:= -\frac{n-2}{n-1} V^{-2} |d U|^2
  \,,
&
 \\
&
\noredA
 := V^{-2}(-dU\otimes dU+\frac{1}{n-1}
|d U|^2 g) \,.
 &
\end{eqnarray*}
With this notation, the first two equations in \eq{jaugeequation} take the form
\begin{equation}
 \label{jaugeequationag}
\left\{\begin{array}{l} \nabla^*\nabla V+n V+\langle \Omega,dV\rangle=V a
\,,\\
    \Ric(g)+n g-V^{-1}\Hess_gV+\div^*\Omega=
\noredA \,,
 \end{array}\right.
\end{equation}
The tensor field
$E(g)=\grav_g \Ric(g)$,
has vanishing divergence, which provides
{the equation ${\mathcal B}(\Omega)=0$ for $\Omega$. Indeed, the operator ${\mathcal B}(\Omega)$ defined above has been constructed in~\cite{ChDelayStationary} so that it vanishes when the modified equation for the metric holds and when the Lorentzian  $(n+1)$-dimensional divergence of the $(n+1)$-dimensional energy-momentum tensor vanishes. The latter condition is satisfied when the $(n+1)$-dimensional matter field equations hold; in the current case, this coincides with the last equation in \eqref{jaugeequation}.

An alternative, $n$-dimensional argument proceeds as follows: The calculation following \cite[Equation~(4.6)]{ChDelayStationary} shows that for a solution to the modified equation,
the divergence of $E(g)$ equals
$$
 0  \equiv \div E(g)=\frac{1}{2}{\mathcal B}(\Omega) + \beta
 \,,
$$
where
\bel{21XI16.2}
\beta:=V^{-1}d(Va) - V^{-1}
 \noredA (\nabla V,.)+
 \div(\grav_g \noredA +\frac{a}{2}g)
 \,.
\ee
The vanishing of $\beta$ can be checked by a somewhat lengthy calculation using \eq{jaugeequation}.}

Either way, the Bianchi identity $\div E(g)=0$ shows that
$\Omega$ is in the kernel of ${\mathcal B}$. It follows  from
Corollary~\ref{cor:isoform} that the only solution of this equation
which decays {as described} is zero.
\end{proof}

For future reference we consider the static equations, modified as in \eq{jaugeequationag}, with a general energy-momentum tensor:
\begin{eqnarray}
 \label{15XI16.21+}
  &
  \displaystyle
  -V^ {-1} \nabla^i \nabla_i V  - \frac{ 2 \Lambda} {n-1} +  V^{-1} \nabla^i\Omega \nabla_i V=
\underbrace{ - T_{\alpha \beta} N^\alpha N^\beta - \frac{g^{\alpha\beta}T_{\alpha\beta}  } {n-1}}
    _{     =: a}
  \,,
  &
\\
  &
  \displaystyle
   \phantom{xxx}
 R_{ij}- V^{-1} \nabla_i\nabla_j V - \frac{2\Lambda}{n-1} g_{ij} +\frac 12 (\nabla_i \Omega_j + \nabla_j \Omega_i)=
  \underbrace{ T_{ij} - \frac {g^{\alpha\beta}T_{\alpha\beta} } {n-1} g_{ij}}_{=:\noredA _{ij}}
 \,,
 &
\eeal{14XI16.1+ }
where $N^\alpha\partial_\alpha$ is the unit timelike normal to the initial data surface
(compare \eqref{15XI16.21}-\eqref{14XI16.1}, Appendix~\ref{s15XI16.1}). We have:

\begin{prop}
  \label{p20XI16.1}
Let $T_{\mu\nu}$ be divergence-free with respect to the time-independent metric
$$
\glorentz = -V^2 dt^2 + g
$$
as a consequence of the matter field equations, with $\partial_t T_{\mu\nu} = T_{0i}=0$. Set
\bel{28XI16.21}
  a:=   - T_{\alpha \beta} N^\alpha N^\beta - \frac{g^{\alpha\beta}T_{\alpha\beta}  } {n-1}
  \,,
  \quad
 \noredA _{ij} :=  T_{ij} - \frac {g^{\alpha\beta}T_{\alpha\beta} } {n-1} g_{ij}
  \,.
\ee
Then
\bel{28XI16.1}
 \beta \equiv 0
 \,.
\ee
\end{prop}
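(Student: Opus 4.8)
The plan is to read off $\beta$ as (minus) the spatial part of the $(n+1)$-dimensional conservation law $\nabla^{(n+1)}_\mu T^\mu{}_\nu=0$, which holds by hypothesis. First I would record the Christoffel symbols of the static metric $\glorentz=-V^2dt^2+g$. Writing the time index as $0$, staticity ($\partial_t\glorentz=0$) and the block structure $g_{0i}=0$ kill all symbols except the spatial ones $\Gamma(g)^i_{jk}$ together with
\[
\Gamma^0_{0i}=V^{-1}\nabla_iV\,,\qquad \Gamma^i_{00}=V\nabla^iV\,.
\]

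With these I would compute $\nabla^\mu T_{\mu\nu}$ componentwise, using the hypotheses $\partial_tT_{\mu\nu}=0$ and $T_{0i}=0$. The timelike component $\nabla^\mu T_{\mu0}$ vanishes identically: every term either carries a $\partial_t$ or contracts a Christoffel symbol against $T_{0i}=0$, so the $\nu=0$ equation carries no information, which is the expected consistency check given that $\beta$ is a spatial one-form. The content sits in the spatial component. There one finds $\nabla_0T_{0j}=-V\nabla^iV\,T_{ij}-V^{-1}\nabla_jV\,T_{00}$, while $g^{ik}\nabla_kT_{ij}=\nabla^kT_{kj}$ is the purely spatial divergence since no time index appears. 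Contracting with $g^{00}=-V^{-2}$ and writing the energy density as $\mu:=T_{\alpha\beta}N^\alpha N^\beta=V^{-2}T_{00}$ as in \eqref{28XI16.21}, the equation $\nabla^\mu T_{\mu j}=0$ becomes
\[
\nabla^kT_{kj}+V^{-1}\nabla^iV\,T_{ij}+V^{-1}\mu\,\nabla_jV=0\,.
\]

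It then remains to expand $\beta$ from \eqref{21XI16.2} and match. Substituting $\noredA_{ij}=T_{ij}-c\,g_{ij}$ and $a=-\mu-c$, with $c:=\tfrac{1}{n-1}g^{\alpha\beta}T_{\alpha\beta}$, into $\beta=V^{-1}d(Va)-V^{-1}\noredA(\nabla V,\cdot)+\div(\grav_g\noredA+\tfrac{a}{2}g)$, and using the geometers' conventions $(\div h)_i=-\nabla^kh_{ik}$ and $\grav_g\noredA=\noredA-\tfrac12(\Tr_g\noredA)g$, the terms proportional to $c\,\nabla_jV$ cancel, and the surviving $\nabla_jc$, $\nabla_j\mu$ and $\nabla_j\Tr_gT$ contributions collapse once one inserts $g^{\alpha\beta}T_{\alpha\beta}=-\mu+\Tr_gT$. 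What is left is exactly $\beta_j=-\bigl(\nabla^kT_{kj}+V^{-1}\nabla^iV\,T_{ij}+V^{-1}\mu\,\nabla_jV\bigr)$, which is minus the spatial conservation law above, giving $\beta\equiv0$.

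The main obstacle is purely organisational rather than conceptual: one must keep the spatial trace $\Tr_gT=g^{ij}T_{ij}$ strictly distinct from the full space-time trace $g^{\alpha\beta}T_{\alpha\beta}=-\mu+\Tr_gT$, and track the $\tfrac{1}{n-1}$ weights carefully so that the trace-adjustment terms hidden in $a$ and $\noredA$ cancel exactly. The one genuinely geometric input is the $\Gamma^i_{00}=V\nabla^iV$ contribution to $\nabla_0T_{0j}$: this is what produces the coupling $V^{-1}\nabla^iV\,T_{ij}$ between the stress tensor and $\nabla V$, and reproducing its sign and $V$-weight correctly after contraction with $g^{00}=-V^{-2}$ is the crux of the matching with the $V^{-1}\noredA(\nabla V,\cdot)$ and $V^{-1}d(Va)$ terms in $\beta$.
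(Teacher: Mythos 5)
Your proposal is correct and follows essentially the same route as the paper: you rewrite $\beta$ in terms of $T_{\mu\nu}$, observe that the trace combination $\tfrac{2}{n-1}g^{\alpha\beta}T_{\alpha\beta}+\noredA^k{}_k+a$ vanishes so that all trace-adjustment terms cancel, and identify what remains with (minus) the spatial component of $\tilde\nabla^\mu T_{\mu\nu}=0$ computed via the static Christoffel symbols, exactly as in the paper's comparison of \eqref{21XI16.2+} with \eqref{20XI16.3}. Your explicit form of the spatial conservation law, $\nabla^kT_{kj}+V^{-1}\nabla^iV\,T_{ij}+V^{-1}(T_{\alpha\beta}N^\alpha N^\beta)\nabla_jV=0$, agrees with the appendix formula \eqref{divTstatic} (with $\epsilon=-1$ and $T_{00}=V^2T_{\alpha\beta}N^\alpha N^\beta$), so the matching goes through as you describe.
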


\proof
Equation~\eq{21XI16.2} can be rewritten as
\beal{21XI16.2+}
 \phantom{xxx}
-\beta_j  & = &
 \nabla_i\big( \noredA ^i{}_j - \frac 12 (\noredA ^k{}_k  + {a}) \delta^i_j\big)   +  V^{-1}
 \big(\noredA _{ij} - a g_{ij})\nabla V ^i
\\
 \nonumber
 & = &
 \nabla_i\Big( {T^i{}_j
   - \frac 12 (\frac {2g^{\alpha\beta}T_{\alpha\beta} } {n-1} +\noredA ^k{}_k  + {a}) \delta^i_j}\Big)
\\
 &&     +  V^{-1}
 \Big(T_{ij} - (\frac { g^{\alpha\beta}T_{\alpha\beta} } {n-1}
   -  T_{\alpha \beta} N^\alpha N^\beta
    - \frac{g^{\alpha\beta}T_{\alpha\beta}  } {n-1}) g_{ij}\Big)\nabla V ^i
 \,.
 \nonumber
\eea
We have
\begin{eqnarray*}
 \lefteqn{
  \frac {2g^{\alpha\beta}T_{\alpha\beta} } {n-1} + \noredA ^k{}_k +a
  }
  &&
\\
 &&
  = \frac {2g^{\alpha\beta}T_{\alpha\beta} } {n-1} +
    g^{ij} T_{ij} - \frac {n } {n-1}g^{\alpha\beta}T_{\alpha\beta}
   -
     T_{\alpha \beta} N^\alpha N^\beta
     -
      \frac{g^{\alpha\beta}T_{\alpha\beta}  } {n-1} =0
  \,.
\end{eqnarray*}
The $(n+1)$-decomposition of the equation $ \nabla^\mu T_{\mu\nu}=0$,   where $\nabla_\mu$ is the space-time covariant derivative associated with $\glorentz$,  using the formulae for the Christoffel symbols in  Appendix~\ref{s15XI16.1} gives
\bel{20XI16.3}
 \nabla^i  T_{ij} +V T_{\alpha \beta} N ^ \alpha N^\beta\nabla_j V
 + V^{-1} \nabla^j V T_{ij} = 0
 \,,
\ee
where $\nabla^i$ is the covariant derivative operator of $g$. Comparing with \eq{21XI16.2+} gives the result.
\hfill\qedskip

\section{The construction}
 \label{s13XI16.1}

 We consider  an asymptotically hyperbolic Einstein
static metric $\mathring{\griem}={\mathring V}^2d\varphi^2+{\mathring g}$ on
$\mbbS^1\times M$. We apply Theorem~\ref{isofunction} with $s=-1$,  $g$
--- a Riemannian metric on $M$ close to ${\mathring g}$ in
$C_{0}^{k+2,\alpha}(M,{\mathcal S}_2)$, and $V$ --- a function on $M$
close to ${\mathring V}$ in $C_{-1}^{k+2,\alpha}(M)$. It is convenient to choose some
\bel{16XI16.23}
 \mbox{
$\delta\in(0,1)$ when $n=3$ and $\delta=1$ if $n>3$.}
\ee
We conclude that for any $\hat{U}\in C^{k+2,\alpha}(\partial M)$,
there
exists a unique  solution
$$ U=U(\hat{U},V,g)\in C_{0}^{k+2,\alpha}(M)$$ to
$$
\left\{\begin{array}{l} \nabla^*(V^{-1}\nabla U)=0,\\
U-\hat{U}\in C_{\delta}^{k+2,\alpha}(M)\,.
\end{array}\right.
$$
 Moreover, the map $(\hat{U},V,g)\mapsto
U-\hat{U}\in C_{\delta}^{k+2,\alpha}(M)$ is smooth.\\

We define a new map $F$, defined on the set of functions on the conformal boundary at infinity
$\partial_\infty M$ times functions on $M$ times symmetric two-tensor
fields  {on $M$}, mapping to functions on $M$ times symmetric two-tensor fields  {on $M$},
which to $(\hat{U},V,g)$ associates
$$
\left(\begin{array}{c}V(\nabla^*\nabla V+nV+\langle
\Omega(V,g,{\mathring V},{\mathring g}),dV\rangle)+\frac{n-2}{n-1}  |d U|^2\\
\Ric(g)+ng-V^{-1}\nabla_i\nabla_jV+\div^*\Omega(V,g,{\mathring
V},{\mathring g})+ V^{-2}( d U\otimes d U-\frac{1}{n-1}|d U|^2
g)\end{array}\right)\,.
$$

\begin{proposition}\label{Flisse}
Let $\mathring{\griem}={\mathring V}^2d\varphi^2+{\mathring g}$ be an asymptotically
hyperbolic static Einstein metric on $\mbbS^1\times M$, $k\in\N$,
$\alpha\in(0,1)$. The map ${\mathcal   F}$   defined as
$$
\begin{array}{ccc}
 C^{k+2,\alpha}(\partial M)\times C_{1}^{k+2,\alpha}(M)\times
 C_{2}^{k+2,\alpha}(M,{\mathcal
 S}_2)&\longrightarrow&C_{0}^{k,\alpha}(M)\times
 C_{ {2} }^{k,\alpha}(M,{\mathcal  S}_2)
\\
 (\hat{U},W,h)&\longmapsto&F(\hat{U},{\mathring
 V}+W,{\mathring g}+h)
\end{array}
$$
is  smooth  in a neighborhood of zero.
\end{proposition}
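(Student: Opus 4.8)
The plan is to exhibit $\mathcal F$ as a finite composition of elementary maps, each smooth between the relevant weighted \holder spaces, and then to track weights carefully so that the output lands in $C^{k,\alpha}_0(M)\times C^{k,\alpha}_2(M,{\mathcal S}_2)$. Smoothness will be automatic once the composition is set up; the genuine content is the weight bookkeeping, together with a cancellation forced by the Einstein condition on $\mathring\griem$.

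First I would record the building blocks, all standard for asymptotically hyperbolic weighted spaces (cf.~\cite{Lee:fredholm,ChDelayStationary}). Multiplication and tensor contraction are bounded bilinear maps $C^{k,\alpha}_a\times C^{k,\alpha}_b\to C^{k,\alpha}_{a+b}$, hence smooth. Covariant differentiation with respect to the \emph{fixed} background $\mathring g$ maps $C^{k+1,\alpha}_\delta\to C^{k,\alpha}_\delta$, and the $g$-connection differs from the $\mathring g$-connection by Christoffel corrections that are polynomial in $\mathring\nabla h$ and rational in $g$; consequently $\Ric(g)$, $\Hess_g$ and $\div^*$ depend smoothly on $h$. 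Since $\mathring V\ge c\rho^{-1}>0$ and $W$ is small in $C^{k+2,\alpha}_1$, the function $V=\mathring V+W$ stays bounded away from zero, so $V^{-1}\in C^{k+2,\alpha}_1$ and $V^{-2}\in C^{k+2,\alpha}_2$ depend smoothly on $W$, and similarly $g^{-1}$ depends smoothly on $h$. Finally, by the discussion preceding the Proposition the map $(\hat U,V,g)\mapsto U$ is smooth into $\hat U+C^{k+2,\alpha}_\delta(M)$, so $U\in C^{k+2,\alpha}_0(M)$ and $dU\in C^{k+2,\alpha}_0(M,{\mathcal T}_1)$ depend smoothly on $(\hat U,W,h)$.

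With these in hand I would read off the weights term by term in $F(\hat U,\mathring V+W,\mathring g+h)$. In the tensor component, each of $\Ric(g)+ng$, $V^{-1}\Hess_g V$, $\div^*\Omega$ and the Maxwell stress $V^{-2}(dU\otimes dU-\tfrac1{n-1}|dU|_g^2 g)$ is a smooth function of $(\hat U,W,h)$; the weight arithmetic gives the target $C^{k,\alpha}_2$: for instance $V^{-2}\in C_2$ multiplies the $C_0$ Maxwell stress, while $\Omega\in C_2$ by \eqref{Omegaeq} (the term $g^{\ell m}\mathring\nabla_m h_{j\ell}$ lies in $C_2$ since $\mathring\nabla h\in C_2$, and $V^2-\mathring V^2=2\mathring VW+W^2\in C_0$ controls the second term after multiplication by $V^{-2}\in C_2$), whence $\div^*\Omega\in C_2$. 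The function component is handled identically, every term landing in $C^{k,\alpha}_0$ (here $|dU|_g^2\in C_0$ because $dU\in C_0$).

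The step I expect to be the main obstacle is not smoothness, which is formal, but \emph{well-definedness}: showing that the most singular background pieces cancel so that the value lies in the target weights rather than in a coarser space. At $(\hat U,W,h)=0$ one has $U\equiv 0$ and $\Omega(\mathring V,\mathring g,\mathring V,\mathring g)\equiv 0$, while the individually singular terms $\mathring V(\nabla^*\nabla\mathring V+n\mathring V)\in C_{-2}$ and $\Ric(\mathring g)+n\mathring g-\mathring V^{-1}\Hess_{\mathring g}\mathring V\in C_0$ must vanish in order to place the value in $C_0\times C_2$; this vanishing is precisely the static vacuum system \eqref{mainequation} with $U=0$ satisfied by the Einstein metric $\mathring\griem$. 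Thus $\mathcal F(0)=0$, the first-order part is the operator $(l,L)$ of Section~\ref{sec:line} (the Maxwell stress being quadratic in $\hat U$ contributes nothing at first order), and $(l,L)$ maps $C_1\times C_2\to C_0\times C_2$ consistently with Corollary~\ref{isotordu}; every higher-order term carries an extra factor of $W$, $h$ or $U$ and so gains positive weight, landing in the same or finer spaces. Smoothness of $\mathcal F$ in a neighborhood of zero then follows by composing the smooth building blocks above.
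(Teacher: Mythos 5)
Your proposal is correct and follows essentially the same route as the paper: the paper likewise checks only the new Maxwell stress term directly (noting $V^{-2}(dU\otimes dU-\tfrac1{n-1}|dU|_g^2g)\in C^{k,\alpha}_{2+2\delta}\subset C^{k,\alpha}_2$, a slightly sharper weight than your $C_2$), verifies positivity of $V$ and $g$ and smoothness of $(\hat U,V,g)\mapsto U$, and outsources the remaining weight bookkeeping and the cancellations coming from the background static Einstein equations to \cite[Proposition~5.2]{ChDelayStationary}, \cite[Theorem~4.1]{GL} and \cite{Delay:etude}. You simply write out what those citations contain, correctly identifying the vanishing of the background terms as the point where the Einstein hypothesis enters.
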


\begin{proof}
{Let $\delta $ be as in \eq{16XI16.23}.
We have,} by direct estimations,
$$
V^{-2} (dU\otimes dU-\frac1{n-1} |d U|^2 g)\in  {C_{2 + 2 \delta}^{k,\alpha}(M,{\mathcal S}_2)\subset}
C_{2}^{k,\alpha}(M,{\mathcal S}_2)\,.
$$
{The remaining arguments of the proof of~\cite[Proposition~5.2]{ChDelayStationary} apply with trivial modifications:}
The function ${\mathring V}\in C_{-1}^{k+2,\alpha}(M)$ is strictly
positive, so the same is true for ${\mathring V}+W$ if $W$ is
sufficiently small in $C_{1}^{k+2,\alpha}(M)\subset
C_{-1}^{k+2,\alpha}(M) $. Similarly, the symmetric two-tensor
field ${\mathring g}+h\in C_{0}^{k+2,\alpha}(M,{\mathcal S}_2) $
is positive definite when $h$ is small in
$C_{2}^{k+2,\alpha}(M,{\mathcal S}_2)\subset
C_{0}^{k+2,\alpha}(M,{\mathcal S}_2)$. The map
$(\hat{U},V,g)\mapsto U$ is smooth.
The fact that the remaining terms in $F(\hat{U},{\mathring
V}+W,{\mathring g}+h)$ are in the space claimed, and that the map is
smooth, follows  from standard  calculations (see~\cite[proof of Theorem~4.1]{GL} or  \cite[Appendix]{Delay:etude} for {associated} detailed calculations).
\end{proof}

We are ready to  formulate now:

\begin{theorem}\label{imp3d}
Let $\dim M=n\geq3$ and let ${\mathring V}^2d\varphi^2+{\mathring g}$ be a
non-degenerate asymptotically hyperbolic static Einstein metric on
$\mbbS^1\times M$, $k\in\N$, $\alpha\in(0,1)$, $\delta\in(0,1)$ in $n=3$ and  $\delta=1$
if $n>3$. For all
$\hat{U}$ close to zero in $C^{k+2,\alpha}(\partial M)$  there exists a unique solution
$$(U,V,g)=(\hat{U}+u,{\mathring V}+W,{\mathring
g}+h)$$ to \eq{mainequation} with
$$
(u,W,h)\in C_{\delta}^{k+2,\alpha}(M)\times
C_{1}^{k+2,\alpha}(M)\times C_{2}^{k+2,\alpha}(M,{\mathcal
S}_2)\,,
$$ close to zero, satisfying the gauge condition $\Omega=0$. Moreover,
the map $\hat{U}\mapsto (u,W,h)$ is a smooth map of Banach spaces
near zero.
\end{theorem}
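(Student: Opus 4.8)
The plan is to construct the solution of the gauge-fixed system \eqref{jaugeequation} by the implicit function theorem applied to the map $\mathcal{F}$ of Proposition~\ref{Flisse}, linearising at the base point $(\hat U,W,h)=(0,0,0)$, and then to recover a genuine solution of \eqref{mainequation} by invoking Proposition~\ref{prop:solmodgivesol} to kill the gauge one-form $\Omega$.

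First I would check that the base point is sent to zero. For $\hat U=0$ the unique solution $U\in C^{k+2,\alpha}_\delta(M)$ of the Maxwell equation $\nabla^*(V^{-1}\nabla U)=0$ supplied by Theorem~\ref{isofunction} (with $s=-1$, which is admissible since $n\ge 3$) is $U\equiv 0$; since moreover $\Omega(\mathring V,\mathring g,\mathring V,\mathring g)=0$ and $\mathring{\griem}$ is Einstein, every term of $\mathcal{F}(0,\mathring V,\mathring g)$ vanishes, so $\mathcal{F}(0,0,0)=0$. Next I would identify the partial differential $D_{(W,h)}\mathcal{F}(0,0,0)$. The only matter contributions to $\mathcal{F}$ are the terms $\frac{n-2}{n-1}|dU|^2$ and $V^{-2}(dU\otimes dU-\frac{1}{n-1}|dU|^2 g)$, which are quadratic in $dU$; since $U$, and hence $dU$, vanishes at the base point, their linearisation there is zero regardless of $D_{(W,h)}U$. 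Consequently $D_{(W,h)}\mathcal{F}(0,0,0)$ reduces to the linearisation $(l,L)$ of the gauge-fixed vacuum operator $(q,Q)$ at $(\mathring V,\mathring g)$ computed in Section~\ref{sec:line}.

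The decisive input is then Corollary~\ref{isotordu} with $\delta=2$: this choice is legitimate because $2\in(0,n)$ precisely when $n\ge 3$, and it makes $(l,L)$ an isomorphism from $C^{k+2,\alpha}_1(M)\times C^{k+2,\alpha}_2(M,\mathcal{S}_2)$ onto $C^{k,\alpha}_0(M)\times C^{k,\alpha}_2(M,\mathcal{S}_2)$ --- exactly the domain and target occurring in Proposition~\ref{Flisse}. Together with the smoothness of $\mathcal{F}$ established there, the implicit function theorem produces, for every $\hat U$ small in $C^{k+2,\alpha}(\partial M)$, a unique small $(W,h)$ with $\mathcal{F}(\hat U,W,h)=0$, depending smoothly on $\hat U$; setting $u=U(\hat U,\mathring V+W,\mathring g+h)-\hat U\in C^{k+2,\alpha}_\delta(M)$, which is smooth in $\hat U$ through the smooth dependence of $U$ on its arguments, gives the asserted triple $(u,W,h)$ solving the modified system \eqref{jaugeequation}.

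It remains to remove the gauge term. Because $(W,h)$ lies in $C^{k+2,\alpha}_1\times C^{k+2,\alpha}_2$ and $\Omega=\Omega(V,g,\mathring V,\mathring g)$ is built algebraically from $V,g$ and their first $\mathring\nabla$-derivatives, vanishing when $(V,g)=(\mathring V,\mathring g)$, the decay of the perturbation gives $\Omega\in C^{2,\alpha}_\delta$ for some $\delta>-1$; Proposition~\ref{prop:solmodgivesol}, which uses the injectivity of $\mathcal{B}$ from Corollary~\ref{cor:isoform}, then forces $\Omega\equiv 0$, so the solution of \eqref{jaugeequation} is in fact a solution of \eqref{mainequation} satisfying the gauge condition $\Omega=0$, and the local uniqueness of $(u,W,h)$ near zero is inherited from the implicit function theorem. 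The point demanding the most care is the weight bookkeeping: verifying that with $\delta=2$ the isomorphism of Corollary~\ref{isotordu} lands exactly on the spaces of Proposition~\ref{Flisse}, that the quadratic Maxwell source indeed falls in $C^{k,\alpha}_2$ (it gains two extra powers of $\rho$, as in the proof of Proposition~\ref{Flisse}), and that $\Omega$ decays fast enough ($\delta>-1$) for Proposition~\ref{prop:solmodgivesol} to apply.
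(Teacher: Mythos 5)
Your proposal is correct and follows essentially the same route as the paper: apply the implicit function theorem to the gauge-fixed map $\mathcal F$ using the isomorphism $(l,L)$ of Corollary~\ref{isotordu} with $\delta=2$, then observe that $\Omega\in C^{k+1,\alpha}_{2}(M,\mathcal T_1)$ and invoke the injectivity of $\mathcal B$ (Corollary~\ref{cor:isoform}, via Proposition~\ref{prop:solmodgivesol}) to conclude $\Omega\equiv 0$. The details you add --- the vanishing of $\mathcal F$ at the base point, the fact that the matter terms are quadratic in $dU$ and hence do not contribute to the linearisation, and the weight bookkeeping --- are exactly the checks the paper leaves implicit.
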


\begin{proof}
As already pointed out, the function
$U=U(\hat{U},V,g)$ exists and is
unique when $W$ and $h$ are small. From Proposition~\ref{Flisse} we
know that the map ${\mathcal F}$ is smooth. The linearisation of
${\mathcal F}$ at zero is
$$
D_{(W,h)}{\mathcal   F}(0,0,0)=D_{(V,g)}F(0,{\mathring
V},{\mathring g})=(l,L)\,.
$$
From Corollary~\ref{isotordu}, with $\delta=2$, we obtain that
$D_{(W,h)}{\mathcal F}(0,0,0)$ is an isomorphism. The implicit
function theorem shows that the conclusion of Theorem~\ref{imp3d}
remains valid \emph{for the modified equation~\eq{jaugeequation}}.
Returning to Section~\ref{secjauge}, we see that
$\Omega=\Omega(V,g,{\mathring V},{\mathring g})\in
C_{2}^{k+1,\alpha}(M,T_1)$.
 {Corollary~\ref{cor:isoform} gives   $\Omega=0$,
and we have obtained a solution to \eq{mainequation}.}
\end{proof}

\noindent{\sc Proof of Theorem~\ref{maintheorem}:}
 Existence and uniqueness of a solution with
 \bel{result+}
 V-\mathring
 V=O(\rho)\,,\quad U=\hat U +O(\rho^{\delta}) ,\quad
 g-\mathring g =\Ogr(\rho^2)=\Ogrbar(1)\,,
 \ee
follows from Theorem~\ref{imp3d}.
 {Here and elsewhere we write $u=O(\rho^\sigma)$ for a tensor $u$ if the coordinate components of $u$ in local coordinates near the boundary are $O(\rho^\sigma)$, and $u=\Ogr(\rho^\sigma)$ if the norm  $|u|_{\mathring g}$ of $u$ with respect to the metric $ \mathring{g} $ is $O(\rho^\sigma)$.}
 Standard arguments show that all the fields $V$, $U$ and $g$ are polyhomogeneous (cf.\ e.g.~\cite{CDLS}, compare~\cite[Section~7]{ChDelayStationary}).

To justify \eq{result}, we plug-in a polyhomogeneous asymptotics as in \eq{result+} in the equations. Since  the trace-free part of the Ricci tensor of $\glorentz=-V^2dt^2+g$ decays in $\mathring{\griem}$-norm as $\rho^4$ near $\rho=0$,
the metric $\glorentz=-V^2dt^2+g$ is vacuum up to this order, which implies that the usual Fefferman-Graham expansion holds for $\glorentz$ up to terms $O_{\mathring{\griem}}(\rho^4 \ln \rho)$.
This leads to the following form of the metric near the conformal boundary
\bel{25XI16.31}
\mathring g=\rho^{-2}(d\rho^2+\mathring h)\;,\;\; \mathring h=\breve{h}+O_{\breve{h}}(\rho^2)\;,\;\; \mathring V=\breve{V}\rho^{-1}+O(\rho)
 \,,
\ee
where $\mathring h$ is a family of Riemannian metrics on the boundary depending upon $\rho$, with
$$
 g=\mathring g+ O_{\mathring g}(\rho^2)\;,\;\;V=\mathring V+O(\rho)
  \,.
$$
The equation $\nabla_i(V^{-1}\nabla^i U)=0$ %
in local coordinates $(\rho,x^A)$ as in \eq{25XI16.31} takes the form
\begin{eqnarray*}
\breve{V}^{-1}\sqrt{\det \breve{h}}\;\partial_\rho\left[\rho^{3-n}(1+O(\rho^2))\partial_\rho U\right]
\hspace{3cm}\\=
-\rho^{3-n}\partial_A\left[\left(\breve{V}^{-1}\sqrt{\det \breve{h}}+O(\rho^2)\right)(\breve{h}^{AB}+O^{AB}(\rho^2))\partial_BU\right]\\
+\partial_\rho\left(\rho^{5-n}O^A(1)\partial_A U\right)+\partial_A\left(\rho^{5-n}O^A(1)\partial_\rho U\right).
\end{eqnarray*}
Let $U$ be a polyhomogeneous solution with $U=\hat U+O(\rho^\delta)$.
When $n=3$, we can see that no logarithmic term $O(\rho \ln \rho)$ is needed in $U$ so that $U=\hat U+O(\rho)$.
When $n=4$, we find
\bel{25XI16.21}
U=\hat U
 \underbrace{ {-}
\frac12\breve{V}\breve{\nabla}_A(\breve{V}^{-1}\breve{\nabla}^A\hat U)}_{=: U_{\ln {}}}\rho^2\ln\rho+O(\rho^2)
 \,.
\ee
(In dimensions $n\ge 5$ on expects in general a logarithmic term $O(\rho^{n-2} \ln \rho)$ in an asymptotic  expansion of $U$.)
\qed
\begin{remark}\label{Ulncst}
The equality
$$
2\int_{\partial M}\breve{V}^{-1}\hat UU_{\ln{}}=\int_{\partial M}\breve{V}^{-1}|\breve{\nabla}\hat U|^2,
$$
together with the definition of $U_{\ln}$, proves that $U_{\ln{}}=0$ if and only if ${\hat U}$ is constant.  When $\hat U$ is constant the function $U:= \hat U$ solves the equations and satisfies the right-boundary values, so the associated solution is vacuum.

\end{remark}
\appendix

\section{Static Einstein-Maxwell equations in dimensions $n+1\ge 4$}
 \label{s15XI16.1}

Consider a Riemannian manifold $(M,g)$, let us denote by $\nabla $ the Levi-Civita derivative operator associated with $g$. Let $V:M\rightarrow \R$,  and set
$$
 ({\mycal  M}=\R\times  M
  \,,
  \,
  \widetilde{ g}=\epsilon
V^2dt^2+g)\,,\quad   \epsilon=\pm1
 \,,
$$
and
$(x^\mu)=(x^0=t,x^i=(x^1,\,.\,.\,.,x^n))$.
We have
\begin{eqnarray}
&
\phantom{xxxx}
\widetilde{\Gamma}^0_{00}=\widetilde{\Gamma}^0_{ij}=\widetilde{\Gamma}^k_{i0}=0,\;\;
\widetilde{\Gamma}^0_{i0}=V^{-1}\partial_iV,\;\;\widetilde{\Gamma}^k_{00}=-\epsilon
V\nabla^kV,\;\;\widetilde{\Gamma}^k_{ij}={\Gamma}^k_{ij}\,,
&
\\
&
\widetilde{R}^l{}_{ijk}={R}^l{}_{ijk},\;\;\widetilde{R}^l{}_{0j0}=-\epsilon
V\nabla_j\nabla^l
V,\;\;\widetilde{R}^0{}_{ij0}=V^{-1}\nabla_j\nabla_iV\,,
&
\\
&
\widetilde{R}^0{}_{ijk}=\widetilde{R}^l{}_{ij0}=\widetilde{R}^l{}_{0jk}
 =0\,,
&
\\
&\label {7XI16.1}
\widetilde{R}_{ik}={R}_{ik}-V^{-1}\nabla_k\nabla_iV\,,\;\;\widetilde{R}_{0k}=0\,,\;\;\widetilde{R}_{00}=-\epsilon
V\nabla^i\nabla_iV
 \,,
&
\\
& \label{15XI16.2}
\widetilde{R}=R-2V^{-1}\nabla^i\nabla_iV\,.
\end{eqnarray}

{Consider the $(n+1)$-dimensional Einstein  equations
\bel{20XI16.2}
\tilde R_{\alpha\beta} -\frac12 \tilde R \tilde g_{\alpha\beta} +\Lambda\tilde g_{\alpha\beta} = T_{\alpha\beta}
 \,.
\ee
}
Equations~\eq{20XI16.2} with $\tilde g = -V^2 dt^2 +g$ lead to
\beal{7XI16.2}
 &
  \displaystyle
 \widetilde R_{\alpha\beta} = \frac{2\Lambda - \Tr_{\tilde g} T}{n-1} \widetilde g_{\alpha\beta} + T_{\alpha\beta}
 \,,
 &
\\
 &
 \label{15XI16.21}
  \displaystyle
 R_{ij} = V^{-1} \nabla_i\nabla_j V + \frac{2\Lambda}{n-1} g_{ij} + T_{ij} - \frac {\Tr_{\tilde g} T } {n-1} g_{ij}
 \,,
 &
\\
  &
  \displaystyle
  V \nabla^i \nabla_i V = V^2\left( T_{\alpha \beta} N^\alpha N^\beta + \frac{\Tr_{\tilde g} T - 2 \Lambda} {n-1}
   \right)
  \,,
  &
\eeal{14XI16.1}
where $N^\alpha\partial_\alpha$ is the unit timelike normal to the level sets of $t$.

For a static electric field $F = d (U dt)$, $\partial_t U =0$, {and $T_{\mu\nu}$ as in \eq{11X16.1}} we find
\beal{14XI16.2}
%
 |F|^2 & = &  - 2 V^{-2} |dU|^2
 \,,
\\
  {\tilde g^{\mu\nu} T_{\mu\nu}} & \equiv &   \Tr_{\tilde g} T=
 \frac {(3-n)} 4 |F|^2
 =
 - \frac {(3-n)} 2  V^{-2}|dU|^2
 \,,
\\
 T_{ij}
  &= & -V^{-2} \nabla_i U \nabla_j U +\frac{1}{2} V^{-2}|dU|^2 g_{ij}
 \,,
\\
 g^{ij} T_{ij}
  &= &  \frac{ (n-2)}2 V^{-2}  |dU|^2
\\
 T_{\hat 0 \hat 0} & :=&
 T_{\mu\nu}N^\mu N^\nu   = \frac 12 V^{-2}|dU|^2
 \,.
  \label{16XI16.12}
\eea
Choosing $\Lambda$ as in \eq{16XI16.11},
Equation~\eq{14XI16.1} gives
\beal{15XI16.1}
 V \Delta V  & = & \frac {n-2}{ (n-1)} |dU|^2
    -\frac{2 \Lambda V^2}{n-1}
   =  \frac {n-2}{ (n-1)} |dU|^2
    + {n V^2}
 \,.
\eeal{14XI16.3}
%

When $\widetilde{ g}=\epsilon
V^2dt^2+g$ is static and
\begin{eqnarray}
  \displaystyle
  \phantom{xxxxx}
 T=T_{00}(dx^0)^2+T_{ij}dx^idx^j\,, \quad \partial_0T_{00}=\partial_0T_{ij}=0
 \,,
\eeal{Tstatic}
then
\begin{eqnarray}
  \displaystyle
  \phantom{xxxxx}
\tilde\nabla^{\alpha}T_{\alpha \beta}dx^\beta=
\big(\nabla^iT_{ij}+ V^{-1}\nabla^iVT_{ij}-\epsilon V^{-3}\nabla_jVT_{00}\big)dx^j
 \,.
\eeal{divTstatic}

\bigskip
\noindent{\sc Acknowledgements} The research of PTC was supported in
part by the Austrian Research Fund (FWF), Project  P 24170-N16.

\bibliographystyle{amsplain}

\bibliography{staticEM-minimal}
\end{document}